\DeclareMathAlphabet{\mathpzc}{OT1}{pzc}{m}{it}
\DeclareMathAlphabet\EuScript{U}{eus}{m}{n}
\date{}
\newtheorem{proposition}{Proposition}[section]
\newtheorem{theorem}[proposition]{Theorem}
\newtheorem{lemma}[proposition]{Lemma}
\def\der{\partial }
\def\nFM0{{\nu }_{F,M_0}}
\def\nFN0{{\nu }_{F,N_0}}
\def\nGN0{{\nu }_{G,N_0}}
\def\N0{ {\bf N}_0 }
\def\ra{\rightarrow}
\def\Xpm{X^{\pm }}
\def\s{\sigma}
\def\l1{{\lambda}_1}
\def\a{\alpha}
\def\a0{ {\alpha }_0}
\def\a1{ {\alpha }_1}
\def\l{\lambda}
\def\nFGM0{{\nu }_{F,G,M_0}}
\def\nFN0{{\nu}_{F,N_0}}
\def\sm{{\sigma}^m}
\def\sm1{{\sigma}^{-1}}
\def\smtp1{{\sigma}^{-t+1}}
\def\S1{S^{-1}}
\def\Xpm1{X^{\pm 1}_1}
\def\sPM1{{\sigma }^{\pm 1}}
\def\sMP1{{\sigma }^{\mp 1 }}
\def\b{\beta}
\def\di{{\rm d.ind}}
\def\L{\Lambda}
\def\Ytm1{Y^{t-1}}
\def\Yim1{Y^{i-1}}
\def\CK{{\cal K}}
\def\ass{{\rm ass}}
\def\ker{ {\rm ker } }
\def\SL2Z{ {\rm SL}_2({\bf Z}) }
\def\th{ \theta }
\def\Gp1{ G^{1 , 1 } }
\def\P11{ P^{-1 , 1 } }
\def\Pp1{ P^{1 , 1 } }
\def\th{\theta}
\def\nCLsr{{}^\nu\kern-2pt {\cal L}^{\sigma , \rho  }}
\def\nP{{}^\nu \kern-2pt P}
\def\nL{{}^\nu\kern-2pt L}
\def\nLL{{}^\nu\kern-2pt \Lambda}
\def\nPsr{{}^\nu\kern-2pt P^{\sigma , \rho  }}
\def\nLsr{{}^\nu\kern-2pt L^{\sigma , \rho  }}
\def\nuCL{{}^\nu\kern-2pt  {\cal L}}
\def\nCLsr{{}^\nu\kern-2pt {\cal L}^{\sigma , \rho  }}
\def\nCL1m{{}^\nu\kern-2pt {\cal L}^{-1 , 1  }}
\def\x1nu{x^\frac{1}{\nu}}
\def\xm1nu{x^{-\frac{1}{\nu}}}
\def\ra{\rightarrow }
\def\CB{{\cal B}}
\def\CC{ {\cal C}}
\def\nAM0{{\nu }_{{\cal A},M_0}}
\def\nAN0{{\nu }_{{\cal A},N_0}}
\def\End{ {\rm End }}
\def\bR{\overline{R}}
\def\bx{\overline{x}}
\def\ga{\mathfrak{a}}
\def\SL{{\rm SL}}
\def\di!{\frac{\der^i}{i!}}
\def\dik!{\frac{\der^k_i}{k!}}
\def\gl{\mathfrak{l}}
\def\N{\mathbb{N}}
\def\0{\overline{0}}
\def\1{\overline{1}}
\def\Ln1{\L_{n,\overline{1}}}
\def\a1{a_{\overline{1}}}
\def\S{\Sigma}
\def\vn1{\overrightarrow{n-1}}
\def\gl{{\rm gl}}
\def\sl{{\rm sl}}
\def\mJ{\mathbb{J}}
\def\mI{\mathbb{I}}
\def\K1{{\rm K}_1}
\def\hmI1{\widehat{\mI_1}}
\def\tmI1{\widetilde{\mI_1}}
\def\tmJ1{\widetilde{\mJ_1}}
\def\hB1{\widehat{B_1}}
\def\hCB1{\widehat{\CB_1}}
\def\Den{{\rm Den}}
\def\Ore{{\rm Ore}}
\def\Den{{\rm Den}}
\def\br{\overline{r}}
\def\ga{\mathfrak{a}}
\def\udim{{\rm udim}}
\def \S{\mathcal{S}}
\def\sl2{\mathfrak{sl}_2}
\def\gl2{\mathfrak{gl}_2}
\def\b1{\overline{1}}
\newenvironment{proof*}[1][\proofname]{\par
  \pushQED{\qed}%
  \normalfont \partopsep=\z@skip \topsep=\z@skip
  \trivlist
  \item[\hskip\labelsep
        \itshape
    #1\@addpunct{.}]\ignorespaces
}{%
  \popQED\endtrivlist\@endpefalse
}
\begin{document}

\author{V. V. \  Bavula 
}

\title{Affirmative answer to the Question of Leroy and Matczuk on injectivity of endomorphisms  of semiprime left Noetherian rings with large images}

\maketitle

\begin{abstract}
 The class of semiprime left Goldie rings is a huge class of rings that contains many  large subclasses of rings --  
semiprime left Noetherian rings,  semiprime rings with Krull dimension, rings of differential operators on  affine algebraic varieties and universal enveloping algebras  of finite dimensional Lie algebras to name a few.  In  the paper, `Ring endomorphisms with large images,' {\em  Glasg. Math. J.} {\bf  55} (2013), no. 2, 381--390,  A. Leroy and J.  Matczuk posed the following  question:
  
   {\em If a ring endomorphism of a
semiprime left Noetherian ring has a large image, must it be injective?}

    The aim of the paper is to give an affirmative answer to the Question of Leroy and Matczuk and to prove the following more general results. \\

{\bf Theorem. (Dichotomy)} {\em Each endomorphism of a semiprime left Goldie ring with large image  is either  a monomorphism  or otherwise  its  kernel contains a regular element of the ring ($\Leftrightarrow$ its  kernel is  an essential left ideal of the ring). In general,  both cases are non-empty.}\\

{\bf Theorem. } {\em Every  endomorphism with large image  of a semiprime ring with Krull dimension  is a monomorphism.}\\

{\bf Theorem. (Positive answer to the Question of Leroy and Matczuk)} {\em Every  endomorphism with large image  of a semiprime left Noetherian ring  is a monomorphism.}\\


{\bf Key Words}: Semiprime ring, semiprime left Goldie ring, left Noethrian ring, ring with Krull dimension, endomorphism with large image, monomorphism.\\

{\bf Mathematics subject classification 2020:} 16P50, 16S50, 16U20, 16W20, 16S85.

{ \small \tableofcontents}
\end{abstract}


\section{Introduction}

In the paper, module means a left module,  $R$ is a unital ring, $\End (R)$ is the semigroup of its  ring endomorphisms,  $\CC_R$ is the set of regular elements (non-zero-divisors)  of $R$ and $Q_l(R):=\CC_R^{-1}R$  is its (classical) left quotient ring (the classical ring of left fractions). 

A left ideal of a ring  is an {\bf essential left ideal} if it has nonzero intersection with every nonzero left ideal of the ring. An endomorphism of a ring  has {\bf large image} if the image contains a left essential ideal of the ring. 

Recall that a ring $R$ is a {\bf left Goldie ring} if it has  {\bf finite left uniform dimension} (i.e. the ring $R$ does not contains an infinite direct sum of nonzero  left ideals) and satisfies the ascending chain condition  (the a.c.c.) on left annihilators. {\bf  Goldie's Theorem} states that {\em a ring $R$ is a semiprime left Goldie ring iff its left quotient ring $Q_l(R)$ is a semisimple Artinian ring}. The key fact in the proof of Goldie's Theorem is that {\em a left ideal of a semiprime left Goldie ring is essential iff it contains a regular element of the ring}.

In  \cite{Leroy-Mat-2013},  Leroy and   Matczuk  study ring endomorphisms of a ring $R$ which have
large images. They prove a number of results assuming different hypotheses. In particular, they proved that:
 
(1) Let $\s$  be an endomorphism with  large image of a prime ring $R$ which has left Krull dimension. Then $\s$ is injective. 

(2) Let $L$ be
a left essential ideal of a semiprime left Noetherian ring $R$. If $\s$ is a ring endomorphism
of $R$ such that $\s (L) = L$, then $\s$ is an automorphism of $R$.

In  \cite{Leroy-Mat-2013},   Leroy and   Matczuk posed the following two  question:
  
 {\em If a ring endomorphism of a
semiprime left Noetherian ring has a large image, must it be injective?}


The aim of the paper is to give an affirmative answer to the Question of Leroy and Matczuk and to prove the following three theorems.

\begin{theorem}\label{13Sep24}
{\bf (Dichotomy)} Each endomorphism of a semiprime left Goldie ring with large image  is either a monomorphism  or otherwise  its  kernel contains a regular element of the ring ($\Leftrightarrow$ its  kernel is  an essential left ideal of the ring). In general,  both cases are non-empty.
\end{theorem}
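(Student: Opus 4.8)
The plan is to first dispose of the equivalence stated in the theorem and then concentrate on the genuine dichotomy. Write $a=\ker\sigma$. Since $\sigma$ is a ring endomorphism, $a$ is a two-sided ideal; in particular it is a left ideal, so by the key fact in Goldie's theorem quoted above, $a$ contains a regular element if and only if $a$ is an essential left ideal. Moreover, in a semiprime ring a two-sided ideal $a$ is left essential precisely when its annihilator $\ann(a)=\{r: ar=0=ra\}$ vanishes: one has $a\cap\ann(a)=0$ and $a\oplus\ann(a)$ essential, so $a$ essential $\Leftrightarrow\ann(a)=0$. Hence the whole theorem reduces to the single implication: if $a\neq 0$ then $\ann(a)=0$. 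I would therefore assume, for contradiction, that $a\neq 0$ and $b:=\ann(a)\neq 0$, so that $a$ and $b$ are nonzero ideals with $ab=ba=0$, $a\cap b=0$ and $a\oplus b$ essential.

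Next I would turn the hypothesis ``large image'' into a usable containment. By definition $\sigma(R)$ contains an essential left ideal $L$; by the Goldie key fact $L$ contains a regular element $c\in\CC_R$, and since $L$ is a left ideal we get $Rc\subseteq L\subseteq\sigma(R)$ with $c$ regular. This single containment $Rc\subseteq\sigma(R)$ is what I would use throughout: it says the image is large in the strong, left-module sense, not merely ring-theoretically. The latter is satisfied by ``diagonal'' embeddings which \emph{do} have nonzero non-essential kernels, so the left-ideal strength of the hypothesis is exactly what must be exploited.

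The first concrete step is the observation that the kernel cannot be a nonzero ``isolated'' ideal. Because $L$ is essential and $a\neq 0$, we have $L\cap a\neq 0$; pick $0\neq w\in L\cap a\subseteq\sigma(R)\cap a$ and write $w=\sigma(v)$. Then $\sigma(w)=0$ forces $\sigma^2(v)=0$, while $\sigma(v)=w\neq 0$ gives $v\notin a$; hence $\ker\sigma\subsetneq\ker\sigma^2$. I would then try to iterate this to manufacture, out of the assumption $b\neq 0$, an infinite properly descending chain of right annihilators $\ann(\ker\sigma)\supsetneq\ann(\ker\sigma^2)\supsetneq\cdots$, which is impossible in a left Goldie ring (the ascending chain condition on left annihilators is equivalent to the descending chain condition on right annihilators). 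Equivalently, and perhaps more cleanly, I would aim to show that when $b=\ann(a)\neq 0$ the strict inclusion above actually increases uniform dimension, $\udim(\ker\sigma^2)>\udim(\ker\sigma)$, while $\ker\sigma^2\cap b=0$; combined with $\udim(a)+\udim(b)=\udim(R)$ coming from $a\oplus b$ essential, this would yield $\udim(\ker\sigma^2)+\udim(b)>\udim(R)$, contradicting $\ker\sigma^2\cap b=0$.

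The hard part is exactly the propagation and transversality built into the last step. To iterate the first observation one needs $\sigma^k$ again to have large image, i.e. $\sigma^k(R)$ to contain an essential left ideal; the natural candidate $R\,c\,\sigma(c)\cdots\sigma^{k-1}(c)\subseteq\sigma^k(R)$ is essential only if $\sigma(c),\sigma^2(c),\dots$ remain regular, so the crux is the implication ``kernel non-essential $\Rightarrow\sigma$ preserves regular elements'' (this must fail when the kernel is essential, which is the genuinely occurring second alternative, so the non-essentiality hypothesis has to be used in an essential way). Once that implication is available, $\sigma$ extends to a ring endomorphism $\hat\sigma$ of the semisimple Artinian ring $Q_l(R)$; the containment $Rc\subseteq\sigma(R)$ with $c$ a unit of $Q_l(R)$ forces $\hat\sigma$ to be surjective, hence injective on the Artinian ring $Q_l(R)$, whence $\ker\sigma=R\cap\ker\hat\sigma=0$, contradicting $a\neq 0$. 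I expect the decisive difficulty to lie in controlling how $\sigma$ interacts with the central idempotents of $Q_l(R)$, equivalently with the finite set of minimal primes $\gp_1,\dots,\gp_t$ of $R$: the auxiliary homomorphisms $R\xrightarrow{\sigma}R\to R/\gp_i$ inherit large image and so have prime image, which shows each $\sigma^{-1}(\gp_i)$ is prime and $a=\bigcap_i\sigma^{-1}(\gp_i)$ is semiprime; turning this into the statement that the support of $a$ is either empty or everything is where the left-Goldie asymmetry and the strength of the large-image hypothesis must be combined.
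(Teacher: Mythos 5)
Your reduction of the dichotomy to the single implication ``$\ker\sigma\neq 0\Rightarrow \ann(\ker\sigma)=0$'' is correct, and so is the observation that largeness of the image together with $\ker\sigma\neq 0$ forces $\ker\sigma\subsetneq\ker\sigma^2$. But none of the proposed ways of finishing actually closes. The ``cleaner'' variant rests on two unjustified claims: a strict inclusion of two-sided ideals does not increase uniform dimension (compare $4\mathbb{Z}\subsetneq 2\mathbb{Z}$), so $\udim(\ker\sigma^2)>\udim(\ker\sigma)$ needs an argument you do not give; and there is no reason why $\ker\sigma^2\cap\ann(\ker\sigma)=0$ --- an element $x$ with $x\cdot\ker\sigma=\ker\sigma\cdot x=0$ and $\sigma(x)\in\ker\sigma$ need not vanish. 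Likewise, strict growth of the chain $(\ker\sigma^k)_k$ only gives $\ann(\ker\sigma^k)\supseteq\ann(\ker\sigma^{k+1})$, not strict descent, so the a.c.c.\ on left annihilators yields no contradiction. Most importantly, the step you yourself label ``the crux'' --- that a non-essential kernel forces $\sigma(\mathcal{C}_R)\subseteq\mathcal{C}_R$ --- is where essentially all the content of the theorem lies, and it is left unproved; once that inclusion is available, injectivity is the comparatively easy Theorem \ref{11Sep24}, so the proposal in effect defers the whole theorem to an unestablished lemma. Even granting it, surjectivity of $\hat\sigma$ on $Q_l(R)$ does not follow immediately from $Rc\subseteq\sigma(R)$: the image of $\hat\sigma$ is a semisimple subring of $Q_l(R)$ but in general not an ideal, and one still needs the identification $Q_l(\sigma(R))=Q_l(R)$ for large subrings.

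The paper bypasses the crux altogether by a localization argument you did not consider. Assuming $\ker\sigma\cap\mathcal{C}_R=\emptyset$, applying the exact functor $\mathcal{C}_R^{-1}(-)$ to $0\to\ker\sigma\to R\to R/\ker\sigma\to 0$ realizes $\mathcal{C}_R^{-1}\ker\sigma$ as an ideal, hence a ring direct factor, of the semisimple Artinian ring $Q_l(R)$, with complementary factor $\mathcal{C}_R^{-1}(R/\ker\sigma)\simeq Q_l(R/\ker\sigma)\simeq Q_l(\sigma(R))$; since $\sigma(R)$ is a large subring, $Q_l(\sigma(R))=Q_l(R)$ by Theorem \ref{20Sep24}.(4), so the factor $\mathcal{C}_R^{-1}\ker\sigma$ has uniform dimension $0$ and $\ker\sigma=0$. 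If you want to salvage your plan, the statement to establish first is precisely Theorem \ref{20Sep24} (regular elements and left quotient rings of large subrings): both your ``crux'' implication and the surjectivity of $\hat\sigma$ are consequences of it, whereas your minimal-primes sketch at the end (semiprimeness of $\ker\sigma$) does not by itself decide between the two alternatives of the dichotomy.
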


 There are  proper containments of the following  classes of rings:
\begin{eqnarray*}
\{\rm semiprime\; left\; Goldie\; rings\} &\supset& \{\rm semiprime\;  rings\; with\; Krull\; dimension\}\\
&\supset&  \{\rm semiprime\; left\; Noetherian\; rings\},
\end{eqnarray*}
This follows from the following facts.
 A semiprime ring with Krull dimension is a left  order in a semisimple artinian ring, \cite[Theorem, p. 519]{Gordon-Robson-1973}, i.e. a semiprime ring with Krull dimension is a semiprime left Goldie ring.  Any Noetherian module has Krull dimension,  \cite{Gabriel-1962}. 
 Semiprime ring with Krull dimension need not be left Noetherian, \cite[Examples 10.7 and 10.8]{Gordon-Robson-KrullDim}. On the other hand,  a semiprime left Goldie ring need not have a Krull
dimension (e.g. the polynomial ring in infinitely many commuting indeterminates over a field).
 Semiprime left Goldie rings are precisely
those semiprime rings with finite uniform dimension and enough monoform left ideals, \cite{Gordon-Robson-KrullDim}.

\begin{theorem}\label{A13Sep24}
Every  endomorphism with large image  of a semiprime ring with Krull dimension  is a monomorphism.
\end{theorem}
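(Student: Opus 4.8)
The plan is to deduce Theorem~\ref{A13Sep24} from the Dichotomy (Theorem~\ref{13Sep24}) together with one elementary Krull-dimension computation. Since a semiprime ring $R$ with Krull dimension is a semiprime left Goldie ring (as recorded in the excerpt), any endomorphism $\sigma$ of $R$ with large image is governed by Theorem~\ref{13Sep24}: either $\sigma$ is a monomorphism, in which case we are done, or $\ker\sigma$ is an essential left ideal and hence contains a regular element $c$ of $R$. The entire task is thus to rule out the second alternative, i.e. to show that the extra hypothesis of Krull dimension forbids $\ker\sigma$ from containing a regular element. I would argue by contradiction, assuming such a $c$ exists.

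First I would record the key drop-of-dimension lemma: \emph{if $c$ is a regular element of a ring $R$ with Krull dimension, then $\Kdim (R/Rc) < \Kdim R$.} This follows from the classical iteration trick. Right multiplication $x \mapsto xc$ is an injective homomorphism of left $R$-modules (injectivity uses that $c$ is right regular), so $R \cong Rc \cong Rc^2 \cong \cdots$, and the descending chain $R \supseteq Rc \supseteq Rc^2 \supseteq \cdots$ has all of its factors $Rc^i/Rc^{i+1}$ isomorphic to $R/Rc$. A chain with infinitely many factors of one fixed Krull dimension forces, by the very definition of Krull dimension via deviation, that this common value be strictly less than $\Kdim R$. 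Since $Rc \subseteq \ker\sigma$, the module $R/\ker\sigma$ is a quotient of $R/Rc$, and therefore $\Kdim (R/\ker\sigma) \le \Kdim (R/Rc) < \Kdim R$.

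The core of the proof is to turn this strict inequality into a contradiction by using the large image. Write $S = \sigma (R)$; it is a unital subring of $R$ (as $\sigma (1) = 1$), and $\bar\sigma \colon R/\ker\sigma \to S$ is a ring isomorphism. Let $L$ be an essential left ideal of $R$ with $L \subseteq S$ (large image). Being a left $R$-ideal contained in the subring $S$, $L$ satisfies $SL \subseteq RL = L$, so $L$ is also a left ideal of $S$; under $\bar\sigma$ it corresponds to $L'/\ker\sigma$ with $L' = \sigma^{-1}(L)$, whence $\Kdim_S (L) = \Kdim_R (L'/\ker\sigma) \le \Kdim (R/\ker\sigma) < \Kdim R$. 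On the other hand, as a left $R$-module $L$ is essential in $R$ and hence contains a regular element $d$ (the Goldie key fact quoted in the excerpt), giving $R \cong Rd \subseteq L$ and therefore $\Kdim_R (L) = \Kdim R$. Finally, because $S \subseteq R$, every $R$-submodule of $L$ is an $S$-submodule, so the lattice of $R$-submodules of $L$ is a sublattice of the lattice of its $S$-submodules, and hence $\Kdim_R (L) \le \Kdim_S (L)$ by monotonicity of deviation under sublattices. Combining the three facts yields $\Kdim R = \Kdim_R (L) \le \Kdim_S (L) < \Kdim R$, a contradiction; so $\sigma$ must be a monomorphism.

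I expect the main obstacle to be precisely this reconciliation of two module structures on the one abelian group $L$: the ``twisted'' structure it inherits as a left ideal of the image subring $S \cong R/\ker\sigma$, over which it is small ($\Kdim_S (L) < \Kdim R$), versus the genuine left $R$-module structure, over which it is as large as $R$ itself. The decisive point is the monotonicity $\Kdim_R (L) \le \Kdim_S (L)$ arising from restriction of scalars to the subring $S$; I would justify it directly from the transfinite definition of deviation, observing that intersections and sums of $R$-submodules of $L$ coincide with those computed over $S$, so that the $R$-submodule lattice is honestly a sublattice of the $S$-submodule lattice, and deviation does not increase on passing to a sublattice. (Incidentally, the same argument applied to a semiprime left Noetherian ring recovers the affirmative answer to the Question of Leroy and Matczuk, since Noetherian rings have Krull dimension.)
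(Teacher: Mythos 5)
Your proof is correct and takes essentially the same route as the paper: after reducing to the Dichotomy theorem, you derive the contradiction by computing the Krull dimension of an essential left ideal $L\subseteq \sigma(R)$ in two ways — full dimension as an $R$-module (since $L$ contains a regular element), but strictly smaller as a $\sigma(R)\cong R/\ker(\sigma)$-module because $\Kdim (R/Rc)<\Kdim (R)$, the two being compared via the sublattice/restriction-of-scalars inequality $\Kdim_R(L)\le \Kdim_{\sigma(R)}(L)$. The paper packages exactly these two steps as Lemma~\ref{a15Sep24} and Proposition~\ref{A25Sep24}, so the content is identical.
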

 
 Proofs of  Theorem \ref{13Sep24} and  Theorem \ref{A13Sep24} are  given in Section  \ref{ENDOM-SLGOLDIE}.
  
  Theorem \ref{B13Sep24}  gives  an affirmative answer to the Question of Leroy-Matczuk.

 \begin{theorem}\label{B13Sep24}
Every  endomorphism with large image  of a semiprime left Noetherian ring  is a monomorphism.
\end{theorem}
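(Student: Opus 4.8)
The plan is to obtain Theorem \ref{B13Sep24} as an immediate specialization of Theorem \ref{A13Sep24}, rather than to argue it from scratch, since all the genuine content has already been extracted in the Dichotomy (Theorem \ref{13Sep24}) and in Theorem \ref{A13Sep24}. Concretely, I would show that the hypothesis of Theorem \ref{A13Sep24} is automatically met by a semiprime left Noetherian ring, and then simply quote Theorem \ref{A13Sep24}.

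First I would recall the class inclusion already flagged in the introduction: every semiprime left Noetherian ring is a semiprime ring with Krull dimension. The point is that if $R$ is left Noetherian, then $R$, viewed as a left module over itself, is a Noetherian module; and by Gabriel's theorem every Noetherian module has Krull dimension, so $R$ has (left) Krull dimension. Hence a semiprime left Noetherian ring is in particular a semiprime ring with Krull dimension, and the containment $\{\text{semiprime left Noetherian rings}\}\subseteq\{\text{semiprime rings with Krull dimension}\}$ holds.

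The proof then reduces to a one-line invocation: let $\s\in\End(R)$ have large image, where $R$ is a semiprime left Noetherian ring. By the previous paragraph $R$ is a semiprime ring with Krull dimension, so Theorem \ref{A13Sep24} applies directly and yields that $\s$ is a monomorphism. I do not expect any obstacle at this final step, precisely because the difficulty has been absorbed one level up: the Dichotomy reduces an endomorphism with large image of a semiprime left Goldie ring to the two cases of being a monomorphism or of having an essential kernel (equivalently, a kernel containing a regular element), and the substantive work of Theorem \ref{A13Sep24} is to exclude the second case under the Krull-dimension hypothesis.

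For completeness I would remark on a possible self-contained route that bypasses Krull dimension. Since a semiprime left Noetherian ring is semiprime left Goldie, the Dichotomy applies, and it would suffice to rule out the case in which $\ker\s$ is an essential left ideal. Here one could try to exploit the ascending chain condition on the two-sided ideals $\ker\s\subseteq\ker\s^2\subseteq\cdots$ together with the largeness of the image to force a contradiction. I would, however, expect the clean and economical argument to be the reduction to Theorem \ref{A13Sep24} described above, and I would present that as the proof.
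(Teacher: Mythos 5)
Your proposal is correct and coincides with the paper's own proof, which simply observes that Theorem \ref{B13Sep24} is a particular case of Theorem \ref{A13Sep24} via the inclusion of semiprime left Noetherian rings among semiprime rings with Krull dimension (justified, as you note, by Gabriel's result that Noetherian modules have Krull dimension). Your write-up is in fact slightly more explicit about this inclusion than the paper's one-line proof.
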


 \begin{proof} Theorem \ref{B13Sep24} is a particular case of Theorem \ref{A13Sep24}.
 \end{proof}

The example below shows that in general both cases in Theorem \ref{13Sep24} are non-empty even in the case of commutative domains. Let $P_\infty =K[x_0, x_1, \ldots ]$ be a polynomial algebra in countably many variables over a field $K$ and an endomorphism  $\s \in \End_K(P_\infty)$ is given by the rule: 
$$
\s (x_0)=0\;\; {\rm and}\;\;\s(x_i)=x_{i-1}\;\; {\rm for\; all}\;\;i\geq 1.
$$
 The algebra $P_\infty$ is a commutative non-Noetherian domain. It is also a prime/semiprime left Goldie ring since the ring  $Q_l(P_\infty )$ is the field of rational functions  $Q_\infty (K):=K(x_0, x_1, \ldots )$ in countably many variables. The endomorphism $\s$ is an {\em epimorphism} with nonzero kernel $(x_0)$ which is an essential ideal of $P_\infty$. In particular, the endomorphism  $\s$  is an endomorphism  of $P_\infty$ with large images which is not  a monomorphism.

So, in general, the statement that `{\em Endomorphisms of prime/semiprime left Goldie rings with large images  are injective}' is wrong.

 Furthermore, an example of Leroy and Matczuk  \cite[Example 2.12]{Leroy-Mat-2013} shows that the endomorphism $\s$  in Theorem \ref{13Sep24} is a monomorphism which is {\em not} an automorphism.

Another example shows that the semiprimeness condition in Theorem \ref{13Sep24} cannot be dropped. Let $R=K[x]/(x^{n+1})$ where $K$ is a field and $n\geq 2$. The ring $R$ is a left Goldie ring but not a semiprime ring since the ideal $(\bx)$ is a nonzero nilpotent ideal  where $\bx = x+(x^{n+1})$. 
 The algebra $R$ is uniserial, i.e. the strictly asscending chain of ideals of $R$,  $0\subset (\bx^n)\subset \cdots \subset (\bx)$, contains all the ideals of $R$. The endomorhism $\s \in \End (R)$, where $\s (\bx)=\bx^n$, has large image since $\s (R) = K\oplus K\bx^n$. The endomorphism  $\s$ is not a monomorphism since $\ker (\s) =(\bx^2)$. The kernel $\ker (\s) =(\bx^2)$, which is a nilpotent ideal of $R$,  contains no regular elements of the ring $R$.


\section{Endomorphisms of semiprime left Goldie rings with large image} \label{ENDOM-SLGOLDIE} 
In the section, proofs of Theorem \ref{13Sep24} and Theorem \ref{A13Sep24} are given. \\

{\bf Left denominator sets, localizations, the left quotient ring of a ring and Goldie's Theorem.} 
  A subset $S$ of $R$ is called a {\em multiplicative set} if  $SS\subseteq S$, $1\in S$ and $0\not\in S$. A 
multiplicative subset $S$ of $R$   is called  a {\em
left Ore set} if it satisfies the {\em left Ore condition}: for
each $r\in R$ and
 $s\in S$, $$ Sr\bigcap Rs\neq \emptyset .$$
Let $\Ore_l(R)$ be the set of all left Ore sets of $R$.
  For  $S\in \Ore_l(R)$, $\ass_l (S) :=\{ r\in
R\, | \, sr=0 \;\; {\rm for\;  some}\;\; s\in S\}$  is an ideal of
the ring $R$. 


A left Ore set $S$ is called a {\em left denominator set} of the
ring $R$ if $rs=0$ for some elements $ r\in R$ and $s\in S$ implies
$tr=0$ for some element $t\in S$, i.e., $r\in \ass_l (S)$. Let
$\Den_l(R)$ (resp., $\Den_l(R, \ga )$) be the set of all left denominator sets of $R$ (resp., such that $\ass_l(S)=\ga$). For
$S\in \Den_l(R)$, let $$S^{-1}R=\{ s^{-1}r\, | \, s\in S, r\in R\}$$
be the {\em left localization} of the ring $R$ at $S$ (the {\em
left quotient ring} of $R$ at $S$). 
 In a similar way, right Ore and right denominator sets are defined. 
Let $\Ore_r(R)$ and 
$\Den_r(R)$ be the set of all right  Ore and  right   denominator sets of $R$, respectively.  For $S\in \Ore_r(R)$, the set  $\ass_r(S):=\{ r\in R\, | \, rs=0$ for some $s\in S\}$ is an ideal of $R$. For
$S\in \Den_r(R)$,  $$RS^{-1}=\{ rs^{-1}\, | \, s\in S, r\in R\}$$   is the {\em right localization} of the ring $R$ at $S$. 

Given ring homomorphisms $\nu_A: R\ra A$ and $\nu_B :R\ra B$. A ring homomorphism $f:A\ra B$ is called an $R$-{\em homomorphism} if $\nu_B= f\nu_A$.  A left and right Ore set is called an {\em Ore set}.  Let $\Ore (R)$ and 
$\Den (R)$ be the set of all   Ore and    denominator sets of $R$, respectively. For
$S\in \Den (R)$, $$S^{-1}R\simeq RS^{-1}$$ (an $R$-isomorphism)
 is  the {\em  localization} of the ring $R$ at $S$, and $\ass (S):=\ass_l(S) = \ass_r(S)$. \\

  For each element $r\in R$, let $r\cdot : R\ra R$, $x\mapsto rx$ and $\cdot r : R\ra R$, $x\mapsto xr$. The sets $${}'\mathcal{C}_R := \{ r\in R\, | \, \ker (\cdot r)=0\}\;\; {\rm  and}\;\;\mathcal{C}_R' := \{ r\in R\, | \, \ker (r\cdot )=0\}$$ are called the {\em sets of left and right regular elements} of $R$, respectively.  Their intersection $$\mathcal{C}_R={}'\mathcal{C}_R \cap \mathcal{C}_R'$$ is the  {\em set of regular elements}  of $R$. The rings $$Q_l(R):= \mathcal{C}_R^{-1}R\;\; {\rm  and}\;\;Q_r(R):= R\mathcal{C}_R^{-1}$$ are called the {\em (classical) left and right quotient rings} of $R$, respectively. If both rings exist then they are isomorphic and the ring
 $$Q(R):= Q_l(R)\simeq  Q_r(R)$$
 is called the {\em (classical) quotient ring} of $R$.   
 Goldie's Theorem states that the ring $Q_l(R)$ is  a semisimple Artinian ring iff the ring $R$ is a semiprime left Goldie ring (the ring $R$ is called a {\em left Goldie ring} if   $\udim (R)<\infty$ and the ring $R$ satisfies the a.c.c. on left annihilators where $\udim$ stands for the {\em left uniform dimension}). In a similar way a {\em right Goldie ring} is defined. A left and right Goldie ring is called a {\em Goldie ring}.

  Proposition \ref{A19Sep24} is a useful result that is used in the proof of Theorem \ref{A11Sep24}. Theorem \ref{A11Sep24} is used in the proof of Theorem \ref{11Sep24}.

 \begin{proposition}\label{A19Sep24}
Let $R$ be a ring, $S\in \Den_l(R,\ga)$, $\th : R\ra S^{-1}R$, $r\mapsto \frac{r}{1}$  and $T$ be a multiplicative subset of $R$ such that $S\subseteq T$ and $\th (T)\subseteq (S^{-1}R)^\times$. Then:
\begin{enumerate}

\item $T\in \Den_l(R, \ga)$ and $T^{-1}R\simeq S^{-1}R$.


\item Suppose, in addition,   that $S\in \Den_l(R,0)$,  the ring $S^{-1}R$ is a semisimple Artininan ring and $T=\CC_R$. Then $\CC_R\in \Den_l(R, 0)$ and $Q_l(R)\simeq S^{-1}R$.

\end{enumerate} 
\end{proposition}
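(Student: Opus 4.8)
The plan is to prove the first statement by directly verifying that $T$ is a left denominator set of $R$ with $\ass_l(T)=\ga$, and then to obtain the isomorphism $T^{-1}R\simeq S^{-1}R$ from the universal property of Ore localization; the second statement will then follow by checking that $T=\CC_R$ satisfies the hypotheses of the first, applied with $\ga=0$.

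The technical heart of the first statement is the left Ore condition for $T$. Given $t\in T$ and $r\in R$, I would first extract from the invertibility of $\th(t)$ the key divisibility relation: writing the left fraction $\th(t)^{-1}=\th(s)^{-1}\th(c)$ with $s\in S$, $c\in R$, one gets $\th(s)=\th(ct)$, hence $s-ct\in\ker\th=\ass_l(S)=\ga$, so $s'(s-ct)=0$ for some $s'\in S$; setting $\hat s=s's\in S$ and $d=s'c$ gives $dt=\hat s\in S$. Thus every $t\in T$ is left-divisible into $S$. Now apply the left Ore condition of $S$ to $r$ and $\hat s$ to find $\bar s\in S$, $\bar r\in R$ with $\bar s\,r=\bar r\,\hat s=\bar r d\,t$; since $\bar s\in S\subseteq T$, this is exactly $t'r=r't$ with $t'=\bar s\in T$. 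The remaining denominator axioms are easy: if $rt=0$ with $t\in T$, then $\th(r)\th(t)=0$ forces $\th(r)=0$, so $r\in\ga=\ass_l(S)$ and some $s\in S\subseteq T$ kills $r$ on the left; the same argument gives $\ass_l(T)=\ga$, the inclusion $\ga\subseteq\ass_l(T)$ being clear from $S\subseteq T$. For the isomorphism I would invoke the universal property twice: $\th\colon R\to S^{-1}R$ inverts $T$, so it factors as $\phi\,\th_T$ through $\th_T\colon R\to T^{-1}R$; and $\th_T$ inverts $S$ (as $S\subseteq T$), so it factors as $\psi\,\th$; uniqueness of $R$-ring maps out of each localization then gives $\phi\psi=\id$ and $\psi\phi=\id$.

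For the second statement I would verify the three hypotheses of the first for $T=\CC_R$. Multiplicativity of $\CC_R$ is routine. Since $\ass_l(S)=0$, the map $\th$ is injective, and then $S\subseteq\CC_R$: for $s\in S$, the element $\th(s)$ is a unit, so $sr=0$ or $rs=0$ forces $\th(r)=0$, hence $r=0$. The crux is $\th(\CC_R)\subseteq(S^{-1}R)^\times$. Here I would show that each $\th(c)$, $c\in\CC_R$, is right regular in $S^{-1}R$: if $q\,\th(c)=0$, write $q=\th(s)^{-1}\th(a)$, multiply on the left by the invertible $\th(s)$ to get $\th(ac)=0$, so $ac=0$ by injectivity, whence $a=0$ because $\ker(\cdot c)=0$, and therefore $q=0$. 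A right regular element of the semisimple (hence left Artinian) ring $S^{-1}R$ is a unit: right multiplication by it is an injective left-module endomorphism of the finite-length module $S^{-1}R$, hence surjective, so the element is left invertible, and a one-sided unit of an Artinian ring is a unit. Thus $T=\CC_R$ meets the hypotheses of the first statement, which yields $\CC_R\in\Den_l(R,0)$ and $Q_l(R)=\CC_R^{-1}R\simeq S^{-1}R$.

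The main obstacle is the passage from regularity in $R$ to invertibility in $S^{-1}R$ in the second statement: a priori a regular element need only become a non-zero-divisor, not a unit. The two ingredients that resolve this are the injectivity of $\th$ (available because $\ass_l(S)=0$), which turns regularity of $c$ in $R$ into one-sided regularity of $\th(c)$ in $S^{-1}R$, and the Artinian hypothesis on $S^{-1}R$, which upgrades one-sided regularity to invertibility. In the first statement the analogous pivot is the divisibility relation $dt=\hat s\in S$, which is what lets the Ore and reversibility conditions for the larger set $T$ be reduced to those already known for $S$.
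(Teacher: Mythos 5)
Your proposal is correct and follows essentially the same route as the paper: both verify the left denominator axioms for $T$ by pulling computations back from $S^{-1}R$ via the invertibility of $\th(T)$, and both reduce part 2 to part 1 by showing that regular elements of $R$ become units in the semisimple Artinian ring $S^{-1}R$. The only cosmetic differences are that you check the Ore condition through the auxiliary relation $dt=\hat{s}\in S$ and obtain the isomorphism from the universal property, whereas the paper computes $rt^{-1}$ as a left fraction directly and writes down the explicit map $t^{-1}r\mapsto t^{-1}r$.
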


\begin{proof} 1. (i) $\ass_l(T)=\ga$: By the assumption $S\subseteq T$. Therefore, $\ga=\ass_l(S)\subseteq \ass_l(T)$. The reverse inclusion follows from the fact that  $\th (T)\subseteq (S^{-1}R)^\times$.

(ii) $T\in \Ore_l(R)$: We have to show that the multiplicative set $T$ satisfies the left  Ore condition: For any elements $t\in T$ and $r\in R$, there are elements  $t'\in T$ and $r'\in R$ such that 
$$t'r=r't.$$
 In the ring $S^{-1}R$, we have the equality 
 $ rt^{-1}= s^{-1}r_1$ for some elements $s\in S$ and $r_1\in R$. Hence, $sr-r_1t\in \ga$, and so $s'(sr-r_1t)=0$ for some element $s'\in S$, i.e.  $s'sr=s'r_1t$. Now, it suffices to take $t'=s's\in S\subseteq T$ and $r'=s'r_1\in R$.

(iii) $T\in \Den_l(R,\ga)$: In view of the statements (i) and (ii), we have to show that $\ass_r (T)\subseteq \ga$. Suppose that $rt=0$ for some elements $r\in R$ and $t\in T$. Then $S^{-1}R\ni \frac{r}{1}\frac{t}{1}=0$, and so $\frac{r}{1}=0$ since $\frac{t}{1}\in (S^{-1}R)^\times $. Therefore, $r\in \ga$ and $\ass_r (T)\subseteq \ga$.

(iv) $T^{-1}R\simeq S^{-1}R$: Since $\th (T)\subseteq (S^{-1}R)^\times$ and $T\in \Den_l(R, \ga)$ (the statement (iii)), the map  
$$\eta: T^{-1}R\ra S^{-1}R, \;\;t^{-1}r\mapsto t^{-1}r$$
is a well-defined homomorphism. The homomorphism $\eta$ is an epimorphism (since $S\subseteq T$) and a monomorphism (since  $\th (T)\subseteq (S^{-1}R)^\times$ and $S\in \Den_l(R,\ga)$). Therefore, $\eta$ is an isomorphism. 

2. Since $S\in \Den_l(R,0)$, the ring $R$ is a subring of the ring $S^{-1}R$ and an essential left $R$-submodule of $S^{-1}R$. The ring $S^{-1}R$ is a semisimple Artininan ring. Therefore, 
$$\Big(S^{-1}R \Big)^\times =\CC_{S^{-1}R}={}'\CC_{S^{-1}R}=\CC_{S^{-1}R}'.$$
 Then  $T=\CC_R\subseteq {}'\CC_{S^{-1}R}=\Big(S^{-1}R \Big)^\times$, and so  $\CC_R\in \Den_l(R, 0)$ and $Q_l(R)\simeq S^{-1}R$, by statement 1.
\end{proof}

{\bf Ring endomorphisms that respect the set of regular elements.} For a ring $R$, 
$$\End(R, \CC_R):=\{ \s \in \End (R)\, | \, \s (\CC_R)\subseteq \CC_R\}$$ is the set of endomorphisms of $R$ that {\em respect the set of regular elements} of the ring  $R$. Suppose that $\CC_R\in \Ore_l(R)$ or equivalently $\CC_R\in \Den_l(R,0)$.  By the universal property of left localization, each endomorphism $\s\in \End(R, \CC_R)$ can be uniquely extended to an endomorphism $\s\in \End(Q_l(R))$ by the rule:
\begin{equation}\label{sQlR}
\s :Q_l(R)\ra Q_l(R), \;\; cr\mapsto   \s (c)^{-1}\s (r)\;\; {\rm where}\;\; c\in \CC_R \;\; {\rm and}\;\;r\in R.
\end{equation}

For a ring $R$ and its module $M$, we denote by $l_R(M)$ its {\em length}. 

Theorem \ref{A11Sep24} is about properties of the images of endomorphisms of  semiprime left Goldie rings that respect the set of  regular elements.

\begin{theorem}\label{A11Sep24}
Let $R$ be a semiprime left Goldie ring, $Q_l(R)=\prod_{i=1}^s Q_i $ where the rings $Q_i$ are simple Artinian rings  and $\s\in \End(R, \CC_R)$. Then:
\begin{enumerate}

\item The ring  $\s (R)$ is  a semiprime left Goldie ring. 

\item The map $\s : Q_l(R)\ra Q_l(\s (R))$, $c^{-1}r\mapsto \s (c)^{-1}\s (r)$ is a split epimorphism where $c\in \CC_R$ and $r\in R$, that is $Q_l(R)\simeq \ker_{Q_l(R)}(\s)\times Q_l(\s (R))$ and $Q_l(\s (R))= \s (Q_l(R))$.


\item $\udim (R)= \udim (Q_l(R))=\udim(Q_l(\s (R)))+l_{Q_l(R)} (\ker_{Q_l(R)} (\s))=
\udim(\s (R))+\udim (\ker_R (\s))$. 

\end{enumerate}
\end{theorem}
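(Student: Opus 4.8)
The plan is to transport the whole problem into the semisimple Artinian ring $Q_l(R)=\prod_{i=1}^s Q_i$, using the canonical extension of $\sigma$ to $Q_l(R)$ from (\ref{sQlR}) together with Proposition \ref{A19Sep24}. First I would study $\sigma$ as an endomorphism of the semisimple Artinian ring $Q_l(R)$. Its kernel $\ker_{Q_l(R)}(\sigma)$ is a two-sided ideal, and since each $Q_i$ is simple, every such ideal has the form $\prod_{i\in I}Q_i$ for some $I\subseteq\{1,\dots,s\}$; hence $\sigma(Q_l(R))\simeq Q_l(R)/\ker_{Q_l(R)}(\sigma)\simeq \prod_{i\notin I}Q_i$ is again semisimple Artinian, and regrouping the factors gives the ring decomposition $Q_l(R)\simeq \ker_{Q_l(R)}(\sigma)\times \sigma(Q_l(R))$. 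This is the backbone of statement 2: the corestriction $\sigma:Q_l(R)\to\sigma(Q_l(R))$ is a surjection whose kernel is a direct factor ideal, hence a split epimorphism with the asserted decomposition.

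The core of the argument is to identify $\sigma(Q_l(R))$ with $Q_l(\sigma(R))$. Writing $A:=\sigma(R)$ and $S:=\sigma(\CC_R)\subseteq A$, I would verify: (a) each element of $S$ is regular in $A$, because $\sigma(\CC_R)\subseteq\CC_R$ and $A\subseteq R$, so $\ass_l(S)=0$; (b) $S\in\Ore_l(A)$, obtained by applying $\sigma$ to the left Ore condition $\CC_R r\cap R c\neq\emptyset$ valid in $R$; together these give $S\in\Den_l(A,0)$. Next, since $\sigma(c)^{-1}=\sigma(c^{-1})\in\sigma(Q_l(R))$ for $c\in\CC_R$, every element of $S$ is a unit in $\sigma(Q_l(R))$, and the universal property of localization yields a ring isomorphism $S^{-1}A\simeq\sigma(Q_l(R))$, which is semisimple Artinian. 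Now Proposition \ref{A19Sep24}(2), applied with $A$ in place of $R$, our $S$ as the denominator set, and $T=\CC_A$, gives $\CC_A\in\Den_l(A,0)$ and $Q_l(A)\simeq S^{-1}A=\sigma(Q_l(R))$. By Goldie's Theorem this proves statement 1, and it completes statement 2, since under the natural identifications inside $Q_l(R)$ we obtain $Q_l(\sigma(R))=\sigma(Q_l(R))$.

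For statement 3 I would assemble three facts. First, $\udim(R)=\udim(Q_l(R))$ since $R$ is an essential left $R$-submodule of $Q_l(R)$ and uniform dimension is preserved by localization at $\CC_R$; applying the same to $A$ gives $\udim(\sigma(R))=\udim(Q_l(\sigma(R)))$. Second, additivity of uniform dimension over the direct product from statement 2 gives $\udim(Q_l(R))=\udim(\ker_{Q_l(R)}(\sigma))+\udim(Q_l(\sigma(R)))$; and because $Q_l(R)$ is semisimple Artinian, every $Q_l(R)$-module is semisimple, so length and uniform dimension coincide and $\udim(\ker_{Q_l(R)}(\sigma))=l_{Q_l(R)}(\ker_{Q_l(R)}(\sigma))$. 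Third, evaluating $\sigma$ on $c^{-1}r$ shows $\ker_{Q_l(R)}(\sigma)=\CC_R^{-1}\ker_R(\sigma)$, the localization of the $\CC_R$-torsionfree left $R$-module $\ker_R(\sigma)$; since localization preserves uniform dimension, $\udim(\ker_R(\sigma))=\udim(\ker_{Q_l(R)}(\sigma))$. Chaining these equalities produces the displayed string of equalities in statement 3.

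I expect the main obstacle to be the identification in the second paragraph: checking carefully that $S=\sigma(\CC_R)$ is a left denominator set of $\sigma(R)$ with $\ass_l(S)=0$ and that its localization $S^{-1}\sigma(R)$ is precisely $\sigma(Q_l(R))$, so that Proposition \ref{A19Sep24} becomes applicable — once this is in place, statements 1 and 2 are immediate and statement 3 is routine uniform-dimension bookkeeping. A secondary delicate point is the identity $\ker_{Q_l(R)}(\sigma)=\CC_R^{-1}\ker_R(\sigma)$ together with the invariance of uniform dimension under localization, which underlies the final equality in statement 3.
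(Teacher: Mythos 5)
Your proposal is correct and follows essentially the same route as the paper: extend $\s$ to $Q_l(R)$, use simplicity of the factors $Q_i$ to split off $\ker_{Q_l(R)}(\s)$ as a direct factor, verify that $\s(\CC_R)\in\Den_l(\s(R),0)$ with localization $\s(Q_l(R))$, and then invoke Proposition \ref{A19Sep24} with $T=\CC_{\s(R)}$ to identify $Q_l(\s(R))$ with $\s(Q_l(R))$ before applying Goldie's Theorem and the uniform-dimension bookkeeping. The only cosmetic difference is that you cite part (2) of Proposition \ref{A19Sep24} where the paper cites part (1); since the hypothesis $\th(T)\subseteq (S^{-1}R)^\times$ is verified there via semisimplicity of $S^{-1}\s(R)$, your citation is if anything the more precise one.
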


\begin{proof} 1--2. Since $\s\in \End(R, \CC_R)$, the endomorphism $\s$ can be uniquely extended to the endomorphism (see (\ref{sQlR})):
 $$\s :Q_l(R)\ra Q_l(R), \;\; cr\mapsto   \s (c)^{-1}\s (r)\;\; {\rm where}\;\; c\in \CC_R \;\; {\rm and}\;\;r\in R.$$

(i)  {\em Up to order, $\ker_{Q_l(R)}(\s)=Q_1\times\cdots \times Q_m\times \{ 0\} \times \cdots\times \{ 0\}=\CC_R^{-1}\ker_R(\s)$ for some $m$ such that} $0\leq m<n$ ($m=0$ iff $\ker_R(\s)=0$): Since  $Q_l(R)=\prod_{i=1}^s Q_i $ where the rings $Q_i$ are simple Artinian rings and 
$\ker_{Q_l(R)} (\s)$ is an ideal of $Q_l(R)$,  
$$\ker_{Q_l(R)}(\s)=Q_1\times\cdots \times Q_m\times \{ 0\} \times \cdots\times \{ 0\}\supseteq \CC_R^{-1}\ker_R(\s)$$ for some $m$ such that $0\leq m<n$, up to order. 

Suppose that $c^{-1}r\in \ker_{Q_l(R)}(\s)$ where $c\in \CC_R$ and $r\in R$. Then $0=\s(c^{-1}r)=\s(c)^{-1}\s (r)$, and so $\s (r)=0$ (since $\s\in \End(R, \CC_R)$), i.e. $r\in \ker_R (\s)$. Therefore, $$
\ker_{Q_l(R)}(\s)=\CC_R^{-1}\ker_R(\s).
$$
(ii) $\s (Q_l(R))=\{ \s (c)^{-1}\s (r)\, | \, c\in \CC_R, r\in R\}\simeq Q_{m+1}\times\cdots \times Q_s$ {\em is a semisimple Artinian ring and the map $ \s :Q_l(R)\ra \s(Q_l(R))$ is a split epimorphism and}  $Q_l(R)\simeq \ker_{Q_l(R)}(\s)\times \s (Q_l(R))$: Clearly, $\s (Q_l(R))=\{ \s (c)^{-1}\s (r)\, | \, c\in \CC_R, r\in R\}$. By the statement (i), 
$$
\s (Q_l(R))\simeq Q_l(R)/\ker_{Q_l(R)}(\s)= \bigg(\prod_{i=1}^s Q_i\bigg)/\Big( Q_1\times\cdots \times Q_m\times \{ 0\} \times \cdots\times \{ 0\}\Big)\simeq Q_{m+1}\times\cdots \times Q_s
$$
is a semisimple Artinian ring. Clearly, the map $ \s :Q_l(R)\ra \s(Q_l(R))$ is a split epimorphism and $$Q_l(R)\simeq \ker_{Q_l(R)}(\s)\times \s (Q_l(R)).$$
 
(iii) $\s (\CC_R)\subseteq \CC_{\s (R)}$: The  statement (iii) follows from the inclusions $\s (\CC_R)\subseteq \CC_R$ and $\s (R)\subseteq R$. 

(iv) $\s (\CC_R) \in \Den_l(\s (R), 0)$: Since $\CC_R\in \Ore_l(R)$ and  $0\not\in \s (\CC_R)$ (by the statement (iii)), we have that 
$$\s (\CC_R)\in \Ore_l(\s (R)).$$ 
Since  $\s (\CC_R)\subseteq \CC_{\s (R)}$ (the statement (iii)), $\s (\CC_R) \in \Den_l(\s (R), 0)$. 

(v) {\em The map $\iota: \s(\CC_R)^{-1}\s (R)\ra \s(Q_l(R))$, 
$\s (c)^{-1}\s (r)\mapsto \s (c)^{-1}\s (r)$ is an isomorphism where $c\in \CC_R$ and $r\in R$}: By the definition, the map $\iota$ is a well-defined epimorphism (by the statements (ii) and (iv)). Since 
$$\ker (\iota) =\s (\CC_R)^{-1}\ker_R(\s)=0,$$
 the map $\iota$ is an isomorphism. 

(vi) {\em The ring $\s(\CC_R)^{-1}\s (R)\simeq \s(Q_l(R))\simeq Q_{m+1}\times\cdots \times Q_s$ is a semisimple Artinian ring}: The statement (vi) follows from the statements (ii) and (v). 

(vii) $\CC_{\s (R)}\in \Den_l(\s(R),0)$ {\em and $Q_l(\s (R))\simeq \s (Q_l(R))$  via $\s (c)^{-1}\s (r)\mapsto \s (c)^{-1}\s (r)$  where $c\in \CC_R$ and} $r\in R$:
 Since $\s (\CC_R) \in \Den_l(\s (R), 0)$ (the statement (iv)), $\s (\CC_R)\subseteq \CC_{\s(R)}$ (the statement (iii)) and the ring 
$$\s(\CC_R)^{-1}\s (R)\simeq \s(Q_l(R))\simeq Q_{m+1}\times\cdots \times Q_s$$
 is a semisimple Artinian ring (the statement (vi)), by  Proposition \ref{A19Sep24}.(1) (where $S=\s (\CC_R)$ and $T=\CC_{\s (R)}$),  we have that 
 $$\CC_{\s (R)}\in \Den_l(\s(R),0)\;\; {\rm  and}\;\; Q_l(\s (R))\simeq \s(\CC_R)^{-1}\s (R)\simeq \s (Q_l(R))$$  via $\s (c)^{-1}\s (r)\mapsto \s (c)^{-1}\s (r)$  where $c\in \CC_R$ and $r\in R$.

(viii) {\em The ring $\s (R)$ is a semiprime left Goldie ring}: By Goldie's Theorem, the statement (viii) follows from the statements (vi) and  (vii).

(ix) {\em The map $\s : Q_l(R)\ra Q_l(\s (R))$, $c^{-1}r\mapsto \s (c)^{-1}\s (r)$ is a split epimorphism where $c\in \CC_R$ and $r\in R$, that is $Q_l(R)\simeq \ker_{Q_l(R)}(\s)\times Q_l(\s (R))$ and} $Q_l(\s (R))= \s (Q_l(R))$: By the statement (vii), the map  $\s : Q_l(R)\ra \s ( Q_l(R))= Q_l(\s (R))$ is an epimorphism. Now, the statement (ix)  follows from the statement (ii).

3. The $R$-module $R$ is an essential $R$-submodule of $Q_l(R)$. Therefore, $\udim (R)= \udim (Q_l(R))$. Similarly,   the $\s (R)$-module $\s (R)$ is an essential $\s (R)$-submodule of $Q_l(\s (R))$. Therefore, $\udim (\s(R))= \udim (Q_l(\s (R)))$.
 By statement 2, $Q_l(R)\simeq \ker_{Q_l(R)}(\s)\times Q_l(\s (R))$. Therefore, $\udim (\ker_R (\s))=l_{Q_l(R)} (\ker_{Q_l(R)} (\s))$
   and $$\udim (Q_l(R))=\udim(Q_l(\s (R)))+l_{Q_l(R)} (\ker_{Q_l(R)} (\s)),\;\; {\rm  and\; so}$$
$\udim (R)= \udim (Q_l(R))=\udim(Q_l(\s (R)))+l_{Q_l(R)} (\ker_{Q_l(R)} (\s))=
\udim(\s (R))+\udim (\ker_R (\s))$.
\end{proof}

 Theorem \ref{11Sep24}, which is a difficult part of \cite[Theorem 1.12]{Leroy-Mat-2013}, shows that every large endomorphism of a semiprime left Goldie ring that respects regular elements  is a monomorphism.  Here we give a different and shorter proof.

\begin{theorem}\label{11Sep24}
Let $R$ be a semiprime left Goldie ring   and $\s\in \End(R)$ be a large endomorphism. Suppose that $\s (\CC_R)\subseteq \CC_R$ then $\s$ is a monomorphism. 
\end{theorem}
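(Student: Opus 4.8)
The plan is to use Theorem \ref{A11Sep24} to reduce the statement to a uniform-dimension count. Since $\s(\CC_R)\subseteq \CC_R$ by hypothesis, we have $\s\in \End(R,\CC_R)$, so Theorem \ref{A11Sep24} applies: the ring $\s(R)$ is again semiprime left Goldie and the additivity formula
$$\udim(R)=\udim(\s(R))+\udim(\ker_R(\s))$$
holds. Hence $\s$ is a monomorphism as soon as I can show $\udim(\s(R))=\udim(R)$, because then $\udim(\ker_R(\s))=0$ and a left module of zero uniform dimension is zero. So everything reduces to proving that the image $\s(R)$ has the same left uniform dimension as $R$, and this is exactly where the large-image hypothesis must enter.

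First I would unpack the large-image hypothesis. By definition $\s(R)$ contains a left essential ideal $L$ of $R$, and since $R$ is semiprime left Goldie, the key fact behind Goldie's Theorem yields a regular element $c\in \CC_R$ with $c\in L$. Because $\s(R)\subseteq R$, the left ideal $L$ of $R$ is in fact a left ideal of the subring $\s(R)$ as well (from $\s(R)L\subseteq RL\subseteq L$), and $c$, being regular in $R$, is regular in $\s(R)$.

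The technical heart of the argument is to promote ``$L$ essential in $R$'' to ``$L$ essential in $\s(R)$.'' The naive attempt fails, because pushing a nonzero element of a left ideal of $\s(R)$ into $L$ may require coefficients from $R\setminus\s(R)$. I would instead route through the regular element: $\s(R)c$ is a left ideal of $\s(R)$ containing the regular element $c$ (as $c=1\cdot c$ and $\s(1)=1$), so by the same Goldie key fact applied now to the semiprime left Goldie ring $\s(R)$, the left ideal $\s(R)c$ is essential in $\s(R)$. Since $c\in L$ and $L$ is a left ideal of $\s(R)$, we have $\s(R)c\subseteq L$, whence $L$ is essential in $\s(R)$ too.

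Finally I would compare uniform dimensions across the ring extension $\s(R)\subseteq R$. Essentiality of $L$ in $\s(R)$ gives $\udim(\s(R))=\udim_{\s(R)}(L)$; essentiality of $L$ in $R$ gives $\udim_R(L)=\udim(R)$; and since every $R$-submodule of $L$ is an $\s(R)$-submodule (an internal direct sum of $R$-submodules remains direct as abelian groups), restriction of scalars yields $\udim_{\s(R)}(L)\ge \udim_R(L)$. Combining these, $\udim(\s(R))\ge \udim(R)$, while the additivity formula forces $\udim(\s(R))\le \udim(R)$; hence equality, $\udim(\ker_R(\s))=0$, and $\ker_R(\s)=0$. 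I expect the main obstacle to be precisely this change-of-rings step, both the passage from essential in $R$ to essential in $\s(R)$ (resolved by the regular element) and the comparison of uniform dimensions over the two different rings.
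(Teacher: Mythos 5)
Your proof is correct and follows essentially the same route as the paper: both invoke Theorem \ref{A11Sep24}.(3) for the additivity formula $\udim(R)=\udim(\s(R))+\udim(\ker_R(\s))$ and then obtain $\udim(\s(R))\ge \udim(R)$ from the essential left ideal $L\subseteq \s(R)$ by restriction of scalars. The only divergence is that you prove $L$ is essential in $\s(R)$ via the regular element, whereas the paper needs only the trivial inequality $\udim(\s(R))\ge \udim_{\s(R)}(L)$ valid for any submodule, so that step of yours is a harmless detour.
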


\begin{proof} Suppose that $\ker (\s)\neq 0$, we seek a contradiction. On the one hand, by Theorem \ref{A11Sep24}.(3), 
$$ \udim(\s (R))=\udim (R)-\udim (\ker_R (\s))<\udim (R).$$

On the other hand,  the endomorphism $\s$ is large, i.e. the ring $\s (R)$ contains an essential left  ideal,    say $I$, of the ring $R$. Therefore, 
$$ \udim(\s (R))\geq  \udim({}_{\s (R)}I)\geq \udim({}_RI)=\udim(R),$$
a contradiction.
\end{proof}

{\bf Large subrings of semiprime left Goldie rings, their regular elements and left quotient rings.} A subring $R'$ of a ring $R$ is called  a {\bf large subring} if the ring $R'$ contains an essential left ideal of $R$.

For a large subring $R'$ of a ring $R$, 
Theorem \ref{20Sep24} gives explicit descriptions of the sets $\CC_{R'}$ and $Q_l(R')$ in terms of the sets $\CC_R$ and $Q_l(R)$. It also shows that all large subrings $R'$ of a ring $R$ are semiprime left Goldie rings.

\begin{theorem}\label{20Sep24}
Let $R$ be a semiprime left Goldie ring, $R'$ be a large subring of $R$ that contains an  essential left ideal  $I$
 of $R$. Then:
\begin{enumerate}

\item $\CC_{R'}=\CC_R\cap R'$.

\item For each element $c\in \CC_R$, there is an element  $c'\in \CC_R$ such that $c'c\in \CC_{R'}$.

\item  $\CC_{R'}\in \Den_l(R',0)\cap \Den_l(R,0)$.

\item $Q_l(R')=\CC_{R'}^{-1}R'=\CC_{R'}^{-1}R=Q_l(R)$.

\item The ring $R'$ is semiprime left Goldie ring.

\item $\udim (R')=\udim (R)$.

\end{enumerate}
\end{theorem}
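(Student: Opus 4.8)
The plan is to fix $Q:=Q_l(R)$, which is semisimple Artinian by Goldie's Theorem, and to use throughout the two facts recalled in the Introduction: that $\CC_R=R\cap Q^\times$ (an element of $R$ is regular iff it is a unit of $Q$), and that in a semiprime left Goldie ring a left ideal is essential iff it contains a regular element. Since $R'$ contains the essential left ideal $I$ of $R$, everything comes down to transferring regularity and ``denominators'' between $R'$ and $R$ through $I$. Two auxiliary facts carry most of the weight: (a) an element of $R$ with zero left annihilator is already regular, and (b) a clearing lemma: for every $r\in R$ there is $u\in\CC_{R'}$ with $ur\in R'$. For (a), if $\lann_R(x)=\{y\in R:yx=0\}=0$, then clearing denominators gives $\lann_Q(x)=0$, so right multiplication $\rho_x:q\mapsto qx$ is an injective, hence (by finite length of ${}_QQ$) surjective endomorphism of $Q$; a left inverse $q_1$ of $x$ then satisfies $xq_1=1$ as well, so $x\in Q^\times\cap R=\CC_R$.

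I would prove part 1 first. The inclusion $\CC_R\cap R'\subseteq\CC_{R'}$ is immediate, since an element regular in $R$ has no zero divisors at all, hence none in the subring $R'$. For the reverse inclusion let $x\in\CC_{R'}$; I claim $\lann_R(x)=0$, whence $x\in\CC_R$ by (a). If some $0\neq r\in R$ had $rx=0$, then, $I$ being essential, $Rr\cap I$ would contain a nonzero element $a=tr$ with $a\in I\subseteq R'$ and $ax=t(rx)=0$; this nonzero $a\in R'$ would be a left zero divisor of $x$ in $R'$, contradicting $x\in\CC_{R'}$. Hence $\lann_R(x)=0$ and $x\in\CC_R\cap R'$.

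Next I would establish the clearing lemma (b), which I expect to be the crux. For $r\in R$ consider the left ideal $(I:r):=\{v\in R:vr\in I\}$ of $R$. A short case analysis on a test left ideal $N$ (according to whether $Nr=0$ or $Nr\neq0$, using essentiality of $I$ in the second case) shows $(I:r)$ is essential, so $I\cap(I:r)$ is essential and therefore contains a regular element $u\in\CC_R$. Since $u\in I\subseteq R'$, part 1 gives $u\in\CC_{R'}$, while $u\in(I:r)$ gives $ur\in I\subseteq R'$; this is exactly (b). Part 2 is now immediate: applying (b) to $r=c$ for $c\in\CC_R$ yields $u\in\CC_{R'}\subseteq\CC_R$ with $uc\in R'\cap\CC_R=\CC_{R'}$, so $c':=u$ works.

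Finally I would deduce parts 3--6 almost mechanically. For part 3, $\ass_l(\CC_{R'})=0$ because $\CC_{R'}\subseteq\CC_R$ consists of regular elements; the left Ore condition for $\CC_{R'}$ in $R$ follows by writing $rs^{-1}=c^{-1}a$ in $Q$ and clearing the denominator $c$ into $\CC_{R'}$ via part 2, after which (b) pushes the resulting numerator into $R'$ to give the Ore condition inside $R'$; thus $\CC_{R'}\in\Den_l(R,0)\cap\Den_l(R',0)$. For part 4, part 2 gives $\CC_R^{-1}R\subseteq\CC_{R'}^{-1}R$, so $\CC_{R'}^{-1}R=Q$, and (b) gives $\CC_{R'}^{-1}R'=\CC_{R'}^{-1}R=Q$, i.e. $Q_l(R')=Q_l(R)$. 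Part 5 is then Goldie's Theorem applied to $R'$, since $Q_l(R')=Q$ is semisimple Artinian. Part 6 follows because $R'$ is an essential left $R'$-submodule of $Q_l(R')$ and $R$ is an essential left $R$-submodule of $Q_l(R)$, so $\udim(R')=\udim(Q_l(R'))=\udim(Q_l(R))=\udim(R)$, the middle equality holding since $Q_l(R')=Q_l(R)$ and uniform dimension is preserved under these left localizations. The only genuine obstacle is the clearing lemma (b): the essential left ideal $(I:r)$ need not lie in $R'$, and the point is precisely that intersecting it with $I$ forces the clearing regular element into $R'$ while keeping $ur\in I\subseteq R'$.
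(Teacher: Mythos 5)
Your proposal is correct and follows essentially the same route as the paper: both reduce everything to the two Goldie facts that left-regular elements of $R$ are regular and that essential left ideals meet $\CC_R$, prove part 1 by transferring regularity through the essential ideal $I$, and prove parts 2--4 by clearing denominators into $I\subseteq R'$ (your lemma (b), via essentiality of $(I:r)$, is exactly what the paper uses implicitly through $\CC_R^{-1}I=Q_l(R)$, just stated for all $r\in R$ rather than only for regular $c$). The only difference is organizational: isolating (b) with $u\in\CC_{R'}$ rather than merely $u\in\CC_R$ makes the Ore-condition verification in part 3 slightly cleaner, but the mathematics is the same.
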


\begin{proof} 1. The inclusion $\CC_{R'}\supseteq \CC_R\cap R'$ is obvious. It remains to show that the reverse inclusion holds. Given an element $c\in R$,  
 then $c\in \CC_R$ iff $c\in {}'\CC_R$ (since $Q_l(R)$ is a semisimple Artinian ring) iff the map $\cdot c: I\ra R$, $a\mapsto ac$ is injective (since the left ideal $I$ of $R$  essential). Suppose that  $c'\in \CC_{R'}$. Then the map $\cdot c': I\ra R'\subseteq R$, $a\mapsto ac$ is injective. Therefore, $c'\in \CC_R$, and so 
 $\CC_{R'}\subseteq \CC_R\cap R'$.
 
 2. Recall that the ring $R'$ contains an  essential left ideal  $I$ of the ring $R$. Hence, for each element $c\in \CC_R$, there is an element  $c'\in \CC_R$ such that $c'c\in I\subseteq R'$. Clearly, $c'c\in \CC_R\cap R'=\CC_{R'}$, by statement 1.

3. (i) $\CC_{R'}\in \Den_l(R',0))$: We have to show that 
 $\CC_{R'}\in \Ore_l(R')$, i.e. for any pair of elements 
 $c'\in \CC_{R'}$ and $r'\in R'$, there are elements 
$c'_1\in \CC_{R'}$ and $r_1'\in R'$ such that $c_1'r'=r'_1c'$. Since $\CC_{R'}\subseteq \CC_R$ (statement 1) and $\CC_R\in \Den_l(R,0)$, there are elements $c\in \CC_{R}$ and $r\in R$ such that 
$cr'=rc'$. 

Since the left ideal $I$ of $R$ is essential and the ring $R$ is a semiprime left Goldie ring, $I\cap \CC_R\neq \emptyset$. Therefore, $\CC_R^{-1}I=Q_l(R)$. Hence, $c_1r\in I$ for some element $c_1\in \CC_R$. By statement 2, for the element $c_1c\in \CC_R$, there is an element $\xi\in \CC_{R'}$ such that $\xi c_1c\in \CC_{R'}$. Clearly, 
$$ \xi c_1c r'=\xi c_1rc'.$$ 
So, it suffices to take $c_1'=\xi c_1c\in \CC_{R'} $ and $r_1'=\xi \cdot c_1r\in \CC_{R'}I\subseteq I\subseteq R'$.

(ii) $\CC_{R'}\in \Den_l(R,0)$. Since $\CC_{R'}\subseteq \CC_R$ (statement 1), it suffices to show that $\CC_{R'}\in \Ore_l(R)$, i.e. for any pair of elements 
 $c'\in \CC_{R'}$ and $r\in R$, there are elements 
$c'_1\in \CC_{R'}$ and $r_1\in R$ such that $c_1'r=r_1c'$.
 Since $\CC_R\in \Ore_l(R)$, $cr=r'c'$ for  some elements $c\in \CC_R$ and $r'\in R$. By statement 2, $\eta c\in \CC_{R'}$ for some element $\eta \in \CC_{R'}$. Now,
 $$\eta c\cdot r=\eta r'\cdot c'.$$
So, it suffice to take $c_1'=\eta c$ and $r_1=\eta r'$.

4. Recall that $\CC_{R'}\subseteq \CC_R$ (statement 1), $\CC_{R'}\in \Den_l(R,0)$ and $R'\subseteq R$. Therefore,
$$ Q_l(R')=\CC_{R'}^{-1}R'\subseteq \CC_{R'}^{-1}R\subseteq \CC_R^{-1}R=Q_l(R).$$
 By statement 2, we have the reverse inclusion  $ Q_l(R')\supseteq Q_l(R)$. Therefore, $ Q_l(R')= Q_l(R)$.
 
5. By statement 4, the ring $Q_l(R')=Q_l(R)$ is a semisimple Artinian ring. By Goldie's Theorem, the ring $R'$ is a semiprime left Goldie ring.  
 
6. Statement 6 follows from statement 4:  $\udim (R')=\udim (Q_l (R'))=\udim (Q_l (R))=\udim (R)$.
\end{proof}

Suppose that  $R$ is a semiprime left Goldie ring and an endomorphism  $\s \in \End (R)$ has large image. Let $\pi : R\ra \bR :=R/\ker (\s)$, $r\mapsto \br := r+\ker (\s)$. By the Homomorphism Theorem, the map $\overline{\s}: \bR \ra \s (R)$, $ \br\mapsto \overline{\s}(\br )=\s (r)$, is an isomorphism. In particular,
\begin{equation}\label{QlsRR}
\overline{\s}(\CC_{\bR})=\CC_{\s (R)}\;\; {\rm  and}\;\; Q_l(\bR)\stackrel{\overline{\s}}{\simeq} Q_l(\s (R))\simeq Q_l(R),
\end{equation}
by \cite[Proposition 1.9.(3)]{Leroy-Mat-2013} or Theorem \ref{20Sep24}.(4). \\

{\bf Proof of Theorem \ref{13Sep24}.} Let $R$ be a semiprime left Goldie ring and $\s$ be an endomorphism of the ring  $R$ with large image.
 So, there are two cases either $\ker (\s)\cap \CC_R=\emptyset$ or otherwise $\ker (\s)\cap \CC_R\neq \emptyset$. In order to prove Theorem \ref{13Sep24}, it suffices to show that the first case implies that the endomorphism $\s$ is a monomorphism. 
 
 Suppose that $\ker (\s)\cap \CC_R=\emptyset$. We have to show that $\ker (\s)=0$. The ring $Q_l(R)$ is a semisimple Artinian ring. In particular, the ring $Q_l(R)$ a left Noetherian ring. Therefore, for each ideal $\ga$ of $R$, the left ideal  $\CC_R^{-1}\ga$ of $Q_l(R)$ is an {\em ideal} of $Q_l(R)$. 
 By applying the the exact functor $\CC_R^{-1}(-)$ to the short exact sequence of left $R$-modules
 $$0\ra \ker (\s)\ra R\ra \bR=R/\ker (\s)\ra 0,$$
we obtain a short exact sequence of left $R$-modules
 $$0\ra \CC_R^{-1}\ker (\s)\ra \CC_R^{-1}R=Q_l(R)\ra \CC_R^{-1}\bR= \CC_R^{-1}\Big(R/\ker (\s)\Big)\simeq  \CC_R^{-1}R/\CC_R^{-1}\ker (\s) \ra 0.$$
Since the left $Q_l(R)$-module $\CC_R^{-1}\ker (\s)$ is an ideal of the semisimple Artinian algebra $Q_l(R)$, the algebra $Q_l(R)$ is isomorphic to the direct product of  semisimple Artinian rings 
$$Q_l(R)\simeq \CC_R^{-1}\ker (\s)\times \CC_R^{-1}\bR.$$
Clearly, $\CC_R^{-1}\bR \simeq \pi (\CC_R)^{-1}\bR$ and $\pi (\CC_R)\in \Den_l(\bR , 0)$. Since the algebra $\CC_R^{-1}\bR$ is a semisimple Artinian ring,  we must have $\CC_R^{-1}\bR\simeq Q_l(\bR)$. Then
$$ \udim (\CC_R^{-1}\ker (\s))=\udim (Q_l(R))-\udim(\CC_R^{-1}\bR)=\udim (Q_l(R))-\udim(Q_l(R))=0.$$
Hence, $ \udim (\ker (\s))= \udim (\CC_R^{-1}\ker (\s))=0$, i.e. $\ker (\s)=0$. $\Box$\\

{\bf Case: The ring $R$ is a semiprime ring with Krull dimension.} Recall that any semiprime ring with Krull dimension is a semiprime  left Goldie ring. Let $\CK$ be the (left) Krull dimension of a ring or module.

\begin{proposition}\label{A25Sep24}
Let $R$ be a semiprime ring with Krull dimension and an endomorphism  $\s \in \End (R)$ has large image. Then $\CK (\s (R))=\CK (R)$.
\end{proposition}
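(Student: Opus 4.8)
The plan is to establish the equality by proving the two inequalities $\CK(\s(R))\le \CK(R)$ and $\CK(\s(R))\ge \CK(R)$ separately. First I would record that $\s(R)$ actually has Krull dimension, so that both sides are meaningful: by the Homomorphism Theorem $\s(R)\cong \bR=R/\ker(\s)$ as rings, and since $\ker(\s)$ is a two-sided ideal, the lattice of left ideals of $\bR$ coincides with the lattice of left $R$-submodules of the $R$-module $\bR$. As $\bR$ is a quotient of ${}_RR$, it inherits Krull dimension from $R$, whence $\CK(\s(R))=\CK(\bR)=\CK({}_R\bR)\le \CK({}_RR)=\CK(R)$. This simultaneously disposes of the upper bound.

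For the lower bound I would exploit largeness. Since $\s$ has large image, $\s(R)$ contains a left ideal $I$ of $R$ that is essential in ${}_RR$; because $R$ is a semiprime left Goldie ring (every semiprime ring with Krull dimension is such), the key fact behind Goldie's Theorem supplies a regular element $c\in I\cap \CC_R$. The principal left ideal $Rc$ then satisfies $Rc\subseteq I\subseteq \s(R)$, and the map $R\to Rc$, $r\mapsto rc$, is an isomorphism of left $R$-modules (injective because $c$ is right regular), so $\CK({}_RRc)=\CK({}_RR)=\CK(R)$. Moreover $Rc$ is stable under the $\s(R)$-action, since $\s(R)\subseteq R$ gives $\s(R)\cdot Rc\subseteq Rc$; hence $Rc$ is a left $\s(R)$-submodule of $\s(R)$, and therefore $\CK(\s(R))=\CK({}_{\s(R)}\s(R))\ge \CK({}_{\s(R)}Rc)$.

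It remains to compare the two module structures on $Rc$, and this is the step I expect to be the crux. Since $\s(R)$ is a subring of $R$, every $R$-submodule of $Rc$ is a fortiori an $\s(R)$-submodule, so the lattice of $\s(R)$-submodules of $Rc$ contains the lattice of its $R$-submodules as a subposet; as the deviation of a poset is monotone under passing to subposets, this yields $\CK({}_{\s(R)}Rc)\ge \CK({}_RRc)=\CK(R)$. One must first note that $Rc$ has $\s(R)$-Krull dimension, which holds because $Rc$ is a submodule of $\s(R)$ and the latter has Krull dimension by the first paragraph. Chaining the inequalities gives $\CK(\s(R))\ge \CK(R)$, and together with the upper bound the proposition follows. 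The only genuinely delicate point is the restriction-of-scalars monotonicity $\CK({}_{\s(R)}N)\ge \CK({}_RN)$; everything else is bookkeeping with the isomorphism $\s(R)\cong \bR$ and the regular element furnished by Goldie's Theorem.
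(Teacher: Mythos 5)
Your proof is correct and follows essentially the same route as the paper: the upper bound via $\s(R)\cong R/\ker(\s)$, and the lower bound via an essential left ideal $I\subseteq\s(R)$, a regular element $c\in I\cap\CC_R$, and the monotonicity of Krull dimension under restriction of scalars to the subring $\s(R)$. The only immaterial difference is that you apply that monotonicity to $Rc\cong {}_RR$ directly, whereas the paper applies it to $I$ itself after deducing $\CK({}_RI)=\CK(R)$ from the exact sequence $0\ra I\ra R\ra R/I\ra 0$ together with $\CK(R/Rc)<\CK(R)$.
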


\begin{proof} (i)  $\CK (\s (R))\leq\CK (R)$:   The ring $\s (R)$ is an epimorphic image of the ring $R$. Hence, $\CK (\s (R))\leq\CK (R)$. 

(ii)  $\CK (\s (R))\geq\CK (R)$:
By the assumption, the endomorphism  $\s \in \End (R)$ has large image. So, the kernel of $\s$ contains an essential left ideal, say $I$, of the ring $R$. 
It follows from the exact sequence of left $R$-modules 
$ 0\ra I \ra R \ra \bR = R/I\ra 0$
that 
$$\CK (R) = \sup \{ \CK (I), \CK ( R/I)\}.$$
The left ideal $I$ of the semiprime left Goldie ring $R$ contains a regular element, say $c\in \CC_R$. Then 
$\CK (R/I)\leq \CK (R/Rc)<\CK (R)$. Therefore,  $\CK (R) =  \CK (I)$. Now, 
$$\CK (R) =  \CK ({}_RI)\leq \CK ({}_{\s(R)}I)\leq \CK (\s (R)),$$
and the statement (ii) follows. 

By the statements (i) and (ii),  $\CK (\s (R))=\CK (R)$.
\end{proof}

\begin{lemma}\label{a15Sep24}
Let $R$ be a semiprime ring with Krull dimension and an endomorphism  $\s \in \End (R)$ has large image. If $\ker (\s)\cap {}'\CC_R\neq \emptyset$ then $\CK (\s (R))\leq \CK (R)-1<\CK (R)$.
\end{lemma}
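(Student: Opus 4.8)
The plan is to reduce the statement to a single dimension-drop fact: for a left regular non-unit $d$, the passage from ${}_RR$ to ${}_R(R/Rd)$ strictly lowers the Krull dimension, and then to observe that $\sigma(R)$ is a module-quotient of such an $R/Rd$. First I would fix an element $d\in\ker(\sigma)\cap{}'\CC_R$ supplied by the hypothesis. Since $\sigma$ is a ring endomorphism, $\sigma(1)=1\neq 0$, whereas $\sigma(d)=0$; hence $d$ cannot be left invertible (an equation $ed=1$ would force $1=\sigma(e)\sigma(d)=0$), so $Rd$ is a proper left ideal, i.e. $R/Rd\neq 0$.

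Next I would establish the inequality $\CK({}_R(R/Rd))<\CK(R)$. Because $d$ is left regular, each power $d^i$ is left regular as well, so right multiplication $\cdot d^i\colon {}_RR\to {}_RR$, $x\mapsto xd^i$, is an injective endomorphism of the left module ${}_RR$ with image $Rd^i$, carrying the submodule $Rd$ onto $Rd^{i+1}$ and thus inducing an isomorphism ${}_R(R/Rd)\cong Rd^i/Rd^{i+1}$. Consequently the descending chain $R\supseteq Rd\supseteq Rd^2\supseteq\cdots$ has all of its successive factors isomorphic to ${}_R(R/Rd)$, and all of them are nonzero by the previous step. Applying the defining property of Krull dimension to this chain forces all but finitely many factors to have Krull dimension strictly below $\CK(R)$; since the factors are mutually isomorphic (and $R/Rd$ is itself a quotient of $R$, so $\CK(R/Rd)\le\CK(R)$), this yields $\CK({}_R(R/Rd))<\CK(R)$. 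This is precisely the inequality $\CK(R/Rc)<\CK(R)$ already invoked in the proof of Proposition \ref{A25Sep24}, here needed only for a left regular element.

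Finally I would transfer this to $\sigma(R)$. As $\ker(\sigma)$ is a two-sided ideal containing $d$, it contains $Rd$, so there is a surjection of left $R$-modules $R/Rd\twoheadrightarrow R/\ker(\sigma)$, whence $\CK({}_R(R/\ker(\sigma)))\le\CK({}_R(R/Rd))$. By the Homomorphism Theorem $R/\ker(\sigma)\cong\sigma(R)$ as rings, and since the left $R$-submodules of $R/\ker(\sigma)$ coincide with its left ideals, the intrinsic left Krull dimension of the ring $\sigma(R)$ equals $\CK({}_R(R/\ker(\sigma)))$. Combining the displays gives $\CK(\sigma(R))\le\CK({}_R(R/Rd))<\CK(R)$, which is the asserted $\CK(\sigma(R))\le\CK(R)-1<\CK(R)$.

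The main obstacle is the dimension-drop inequality $\CK(R/Rd)<\CK(R)$: its proof rests on the chain argument above, which critically uses that left regularity of $d$ turns right multiplication into an injective endomorphism of ${}_RR$ reproducing the same nonzero factor infinitely often. A secondary point to handle carefully is the bookkeeping of Krull dimensions across the isomorphism $R/\ker(\sigma)\cong\sigma(R)$, ensuring the left Krull dimension of the ring $\sigma(R)$ agrees with the Krull dimension of $R/\ker(\sigma)$ computed as a left $R$-module. I note that the large-image hypothesis is not actually used for this lemma in isolation; it is retained because the lemma is combined with Proposition \ref{A25Sep24} (which does use it) to force $\ker(\sigma)\cap{}'\CC_R=\emptyset$ and thereby, via the Dichotomy, the injectivity of $\sigma$.
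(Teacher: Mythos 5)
Your proposal is correct and follows essentially the same route as the paper: fix $c\in\ker(\s)\cap{}'\CC_R$, note $Rc\subseteq\ker(\s)$, and conclude via $\CK(\s(R))=\CK(R/\ker(\s))\le\CK(R/Rc)<\CK(R)$. The only difference is that you also prove the dimension-drop fact $\CK(R/Rc)<\CK(R)$ for a left regular non-unit $c$ from scratch (via the chain $R\supseteq Rc\supseteq Rc^2\supseteq\cdots$ with isomorphic nonzero factors), whereas the paper invokes it as a standard property of Krull dimension; your observation that the large-image hypothesis is not needed for this lemma is also accurate.
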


\begin{proof} Fix an element $c\in \ker (\s)\cap {}'\CC_R$. Then $Rc\subseteq \ker (\s )$, and so
$$ \CK (\s (R))=\CK (R/\ker (\s))\leq \CK (R/Rc)\leq \CK (R)-1<\CK (R).$$
\end{proof}
{\bf Proof of Theorem \ref{A13Sep24}.} Let $R$ be a semiprime  ring with Krull dimension and $\s$ be an endomorphism of the ring  $R$ with large image. In particular, the ring $R$ is a semiprime left Goldie ring. 
 By  Theorem \ref{13Sep24}, the endomorphism $\s$ is  either a monomorphism or otherwise $\ker (\s)\cap \CC_R\neq \emptyset$. So, in order to prove Theorem \ref{A13Sep24} it suffices to show that 
 $$\ker (\s)\cap \CC_R=\emptyset.
 $$ 
 Suppose that $\ker (\s)\cap \CC_R\neq\emptyset$. Then, by Lemma \ref{a15Sep24}, $\CK (\s (R))<\CK (R)$. By Proposition \ref{A25Sep24}, $\CK (\s (R))=\CK (R)$, a contradiction. Therefore, $\ker (\s)\cap \CC_R\neq \emptyset$, as required. $\Box$

{\bf Licence.} For the purpose of open access, the author has applied a Creative Commons Attribution (CC BY) licence to any Author Accepted Manuscript version arising from this submission.

{\bf Disclosure statement.} No potential conflict of interest was reported by the author.

{\bf Data availability statement.} Data sharing not applicable – no new data generated.

{\bf Funding.} This research received no specific grant from any funding agency in the public, commercial, or not-for-profit sectors.

\small{
}

\begin{minipage}[t]{0.5\textwidth}
	V. V. Bavula
	
	School of Mathematical and Physical Sciences
	
	Division of Pure Mathematics
	
	University of Sheffield
	
	Hicks Building
	
	Sheffield S3 7RH
	
	UK
	
	email: v.bavula@sheffield.ac.uk
\end{minipage}
\begin{minipage}[t]{0.5\textwidth}
	
\end{minipage}

\end{document}